\documentclass[12pt,draft]{amsart}
\usepackage{setspace,units}

%\documentstyle[11pt]{article}   % If the bibliography is changed:
%                               %       latex  moore
\textwidth 15cm                 %       bibtex moore
\textheight 22cm                %       latex  moore
\evensidemargin 6mm             %       latex  moore
\oddsidemargin 6mm              %  If the theorems are changed
\topmargin 10mm                  %       latex  moore
\setlength{\parskip}{1.5ex}     %  should suffice.
%                               %  The file originally had psfig.sty
% theorems etc.                 %  included as
                                %  \documentstyle[11pt,psfig]{article}
\newtheorem{theorem}{Theorem}[section]

\newtheorem{Corollary}[theorem]{Corollary}

\theoremstyle{definition}
\newtheorem{definition}[theorem]{Definition}

\newtheorem{remark}[theorem]{Remark}
%\theoremstyle{remark}
%\newtheorem{remark}[theorem]{Remark}

%\numberwithin{equation}{section}

\RequirePackage{amsmath}
\RequirePackage{amssymb}
\RequirePackage{amsthm}
 \RequirePackage{epsfig}

\begin{document}

\date{}
\title[Algebraic methods in sum-product phenomena  ]
{Algebraic methods in sum-product phenomena}

\author{ Chun-Yen Shen}

\address{Department of Mathematics, Indiana University, Bloomington, IN 47405, USA}
\email{shenc@umail.iu.edu}

\subjclass[2000]{11B75}

\keywords{sums, products, expanding maps}

\date{}

\begin{abstract}
We classify the polynomials $f(x,y) \in \mathbb R[x,y]$ such that given any finite set $A \subset \mathbb R$ if $|A+A|$ is small, then $|f(A,A)|$ is large. In particular, the following bound holds : $|A+A||f(A,A)| \gtrsim |A|^{5/2}.$ The Bezout's theorem and a theorem by Y. Stein play important roles in our proof.
\end{abstract}

\maketitle

%\setstretch{1.05}

\section{introduction}

The sum-product problems have been intensively studied since the
work by Erd\H os and Szemer\'edi \cite{ESZ83} that there exists $c > 0$ such that for any finite set $A \subset \mathbb Z$, one has
$$ \max (|A+A|, |A\cdot A|) \gtrsim{|A|}^{1+c}.$$ Later, much work has been done either to give an explicit bound of $c$ or to give a generalization of the sum-product theorem. One of the important generalizations is the work by Elekes, Nathanson and Ruzsa \cite{ENR00} who showed that given any finite set $A \subset \mathbb R$, let f be a strictly convex ( or concave )
function defined on an interval containing $A.$ Then $$ \max (|A+A|, |f(A)+ f(A)|) \gtrsim{|A|}^{5/4}.$$ Taking $f(x)=\log x$ recovers the sum-product theorem mentioned above by Erd\H os and Szemer\'edi. An analogous result in finite field $\mathbb F_p$ with $p$ prime was proven in 2004 by Bourgain, Katz and Tao \cite{BKT04} that
if  $p^{\delta} < |A| < p^{1-\delta}$, for
some $\delta>0$, then there exists $\epsilon=\epsilon(\delta)>0$
such that
$$ \max (|A+A|, |A\cdot A|) \gtrsim{|A|}^{1+\epsilon}.$$ This remarkable result
has found many important applications in various areas ( see \cite{Bo05}, \cite{Bo09} for further discussions ).
Recently, Solymosi (\cite{So082}) applied spectral graph theory to give a
similar result mentioned above by Elekes, Nathanson and Ruzsa showing that for a class of functions $f$, one has the following bound.
$$\max(|A+B|,|f(A)+C|)\gtrsim \min(|A|^{1/2}q^{1/2},|A||B|^{1/2}|C|^{1/2}q^{-1/2}),$$ for any $A,B,C \subset \mathbb F_q.$ This was further studied by Hart, Li and the author \cite{HLS09} using Fourier analytic methods showing that for suitable assumptions on the functions $f$ and $g$, one has the bound.
 $$\max(|f(A)+B|,|g(A)+C|) \gtrsim \min(|A|^{1/2}q^{1/2},|A||B|^{1/2}|C|^{1/2}q^{-1/2}).$$
A natural and important question one may ask is to classify
two variables polynomials $f(x,y) \in \mathbb F[x,y]$ such
that when $|A+A|$ is small, then $|f(A,A)|$ is large. This problem was first raised and studied by Vu in \cite{Vu08}. Considering $A$ an arithmetics progression shows that if $f(x,y)$ is linear then $|A+A|$ and $|f(A,A)|$ can be small at the same time. More generally, if $f(x,y)=Q(L(x,y)),$ where $L(x,y)$ is linear and $Q$ is a one variable polynomial, then again $|A+A|$ and $|f(A,A)|$ can be small at the same time. This consideration reveals that if $f(x,y)$ is not like $Q(L)$, we should have $|f(A,A)|$ is large when $|A+A|$ is small. Indeed, this was confirmed by Vu \cite{Vu08} using spectral graph theory
showing that if $f(x,y)$ can not be written as $Q(L(x,y)),$ then one has the following bound.
 $$\max(|A+A|,|f(A,A)|)\gtrsim
\min(|A|^{2/3}q^{1/3},|A|^{3/2}q^{-1/4}),$$
for any $A \subset \mathbb F_q.$ This was also the first time using spectral graph theory to study the incidence problems ( see \cite{HLS09}, in which Fourier analytic methods were given to reprove the results by Vu). However this result is only effective when $|A| \geq q^{1/2}.$ Therefore it turns out that if one wants to extend this result to the real setting, new tools are required. As observed by Elekes \cite{El97}, the sum-product problems have interesting connections to the problems in incidence geometry. In particular, he applied the so-called Szemer\'edi-Trotter theorem to show that one can take $c=1/4$ in the above Erd\H os-Szemer\'edi's sum-product theorem. Indeed, in this paper we apply a generalization of Szemer\'edi-Trotter theorem by Sz\'ekely \cite{LS} to establish an analogous result in the reals. Namely, given non-degenerate polynomial $f(x,y) \in \mathbb R[x,y]$ (see section 2 for the definition), then for any finite set $A \subset \mathbb R$ one has the following bound.
$$\max(|A+A|,|f(A,A)|) \gtrsim |A|^{5/4}.$$
One may find the difficulties come from the reducibilities of the polynomials $f(x,y),$ and this is how the Bezout's theorem and a theorem by Y. Stein concerning the reducibility of a multi-variables polynomial come into our proof.

\section{Algebraic Preliminaries}
Given quantities $X$ and $Y$ we use the notation
$X \lesssim Y$ to mean $X \leq C Y,$
where the constant $C$ is universal (i.e. independent of
$A$). The constant $C$ may vary from line to line but are
universal. It is also clear that when one of the quantities $X$ and $Y$ has polynomial $f(x,y)$ involved, the constant $C$ may also depend on the degree of $f$. We now state some definitions and give some preliminary lemmas. The first two definitions can be found in \cite{Vu08} and \cite{YS} respectively. For the convenience of the reader, we state the definitions here.

\begin{definition} A polynomial $f(x,y) \in \mathbb R[x,y]$ is
degenerate if it can be written as $Q(L(x,y))$ where $Q$ is a
one-variable polynomial and $L$ is a linear form in $x,y$.
\end{definition}

\begin{definition}
A polynomial $f(x,y) \in \mathbb C[x,y]$ is composite if it can be written as $Q(g(x,y))$ for some $g(x,y) \in \mathbb C[x,y],$ and some $Q(t) \in \mathbb C[t]$ of degree $\geq 2.$ \end{definition}

\begin{definition}
Given a polynomial $f(x,y) \in \mathbb R[x,y],$ we use $\deg_x(f)$ to denote the degree of $f$ in $x$ variable ( i.e. consider $y$ as a constant). Similarly, denote $\deg_y(f)$ the degree of $f$ in $y$ variable.
\end{definition}

The following theorem is the celebrated Bezout theorem, and the next one is a theorem by Y. Stein [9].

\begin{theorem}(Bezout's theorem)
Two algebraic curves of degree $m$ and $n$ intersect
in at most $mn$ points unless they have a common factor.
\end{theorem}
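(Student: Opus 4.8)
The plan is to deduce this weak form of Bézout's theorem from the theory of resultants, after a generic change of coordinates. Write the two curves as the zero sets $V(f)$ and $V(g)$ of polynomials $f,g$ of total degree $m$ and $n$; here ``having a common factor'' means that $f$ and $g$ share a non-constant polynomial factor, so we are assuming $\gcd(f,g)$ is a constant. We may work over $\mathbb{C}$, since any real intersection point is in particular a complex one.

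First I would make a generic real linear change of variables achieving two things at once: (i) $\deg_y f = m$ and $\deg_y g = n$, with the coefficients of $y^m$ in $f$ and of $y^n$ in $g$ being nonzero constants; and (ii) distinct points of $V(f)\cap V(g)$ have distinct $x$-coordinates. Condition (i) excludes only the finitely many directions along which the top-degree homogeneous part of $f$ or of $g$ vanishes. Condition (ii) excludes only the finitely many directions parallel to a difference $P-P'$ of two points of $V(f)\cap V(g)$ --- and this makes sense because that intersection set is finite, a fact which itself drops out of the resultant computation below (so one first runs that computation to obtain finiteness, then reselects coordinates to also secure (ii); there is no circularity).

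Next, regard $f$ and $g$ as elements of $(\mathbb{C}[x])[y]$. Since their leading $y$-coefficients are nonzero constants they are primitive over $\mathbb{C}[x]$, so by Gauss's lemma the hypothesis $\gcd_{\mathbb{C}[x,y]}(f,g)=1$ forces $f$ and $g$ to remain coprime in $\mathbb{C}(x)[y]$; hence the resultant $R(x):=\mathrm{Res}_y(f,g)\in\mathbb{C}[x]$ is not the zero polynomial. In particular $R$ has finitely many roots, which, together with the fact that $f(a,y)$ has finitely many roots for each fixed $a$, gives the finiteness of $V(f)\cap V(g)$ used above. The key quantitative input is the bound $\deg R\le mn$: writing $f=\sum_{j=0}^m a_j(x)y^j$ and $g=\sum_{j=0}^n b_j(x)y^j$ with $\deg a_j\le m-j$ and $\deg b_j\le n-j$, one assigns to the $(m+n)\times(m+n)$ Sylvester matrix the column weights $0,1,\dots,m+n-1$ and the row weights $0,1,\dots,n-1$ on the $f$-rows and $0,1,\dots,m-1$ on the $g$-rows, so that every entry has $x$-degree at most (its column weight) minus (its row weight); expanding $\det$ over permutations then yields $\deg R\le\binom{m+n}{2}-\binom{n}{2}-\binom{m}{2}=mn$.

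Finally, for each $(a,b)\in V(f)\cap V(g)$ the univariate polynomials $f(a,y)$ and $g(a,y)$ have degrees exactly $m$ and $n$ by (i) and share the root $y=b$, so by the specialization property of resultants $R(a)=\mathrm{Res}_y\bigl(f(a,y),g(a,y)\bigr)=0$. By (ii) distinct intersection points give distinct roots of $R$, hence $|V(f)\cap V(g)|\le\deg R\le mn$. I expect the main obstacles to be the two genuinely algebraic points --- the nonvanishing of $R$, handled by arranging constant leading $y$-coefficients and invoking Gauss's lemma, and the weighted-degree estimate for the Sylvester determinant --- whereas the genericity of the coordinate change is routine bookkeeping. (An alternative route avoiding resultants is an induction on $\min(m,n)$ via repeated pseudo-division in $\mathbb{C}[x][y]$, but there the careful tracking of total degrees is less transparent.)
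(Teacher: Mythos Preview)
Your argument is a correct and standard resultant proof of the weak B\'ezout bound: the generic linear change securing constant leading $y$-coefficients and separated $x$-coordinates, the nonvanishing of $R(x)=\mathrm{Res}_y(f,g)$ via primitivity and Gauss's lemma, the weighted Sylvester estimate $\deg R\le mn$, and the specialization step are all sound, and your handling of the apparent circularity between finiteness of $V(f)\cap V(g)$ and condition~(ii) is clean.

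There is, however, nothing to compare against: the paper does not prove this statement. B\'ezout's theorem is quoted in the preliminaries section as a classical input, on the same footing as Stein's theorem and Sz\'ekely's incidence bound, and is then invoked as a black box in the proof of the main theorem. So your write-up supplies strictly more than the paper does on this point. If anything, for the purposes of this paper one only needs the qualitative consequence that two coprime plane curves meet in $O_{m,n}(1)$ points, and your full argument is more than sufficient for that.
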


\begin{theorem}(Y. Stein)
Given $f(x,y)\in \mathbb C[x,y]$ of degree $k$. Let $\sigma(f)=\{\lambda : f(x,y)-\lambda  \text{ is reducible}\}.$ Suppose $f(x,y)$ is not composite, then $|\sigma(f)| < k.$
\end{theorem}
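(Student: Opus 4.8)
The plan is to deduce the theorem from the sharper assertion that the \emph{total reducibility number}
$$\rho(f)\;:=\;\sum_{\lambda\in\mathbb C}\bigl(N(\lambda)-1\bigr)$$
is at most $k-1$, where $N(\lambda)$ denotes the number of irreducible factors of $f(x,y)-\lambda$ in $\mathbb C[x,y]$ counted with multiplicity; since $\lambda\in\sigma(f)$ precisely when $N(\lambda)\ge 2$, this gives $|\sigma(f)|\le\rho(f)\le k-1$. One may assume $k\ge 2$, as for $k\le 1$ every $f-\lambda$ is irreducible. Write $f-\lambda=\prod_t g_{\lambda,t}^{m_{\lambda,t}}$ with the $g_{\lambda,t}$ distinct and irreducible, and let $r(\lambda)$ be the number of them; then $N(\lambda)-1=\sum_t(m_{\lambda,t}-1)+\bigl(r(\lambda)-1\bigr)$, so it suffices to bound the ``repeated-factor'' sum and the ``distinct-component'' sum together by $k-1$. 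The basic remark, used throughout, is that the entire family $\{g_{\lambda,t}\}$, over all $\lambda$ and $t$, is pairwise coprime: a common factor of two of them coming from $f-\lambda$ and $f-\mu$ with $\lambda\ne\mu$ would divide $(f-\lambda)-(f-\mu)=\mu-\lambda$, a non-zero constant.

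The repeated-factor part is the easy half. If $g:=g_{\lambda,t}$ appears with exponent $m\ge 2$, then differentiating $f-\lambda=g^m h$ shows that $g^{m-1}$ divides both partial derivatives of $f$. As $f$ is non-composite it involves both variables, so neither partial is zero; by the pairwise coprimality just noted, $\prod_{\lambda,t}g_{\lambda,t}^{\,m_{\lambda,t}-1}$ divides the gcd of the two partials, whence $\sum_{\lambda,t}(m_{\lambda,t}-1)\deg g_{\lambda,t}\le k-1$.

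For the distinct-component part I would pass to $\mathbb P^2$. Homogenising $f$ to $F(X,Y,Z)=Z^kf(X/Z,Y/Z)$ turns $\{f-\lambda=0\}$ into the members of the pencil $\{F=\lambda Z^k\}$, all of which meet the line at infinity $L=\{Z=0\}$ in the \emph{same} divisor, the zero locus of the leading form $f_k(X,Y)=F(X,Y,0)$ (at most $k$ points), since two distinct members can meet only along $L$. Consequently, for distinct $\lambda_1,\dots,\lambda_r$ in $\sigma(f)$ and any choice of irreducible factors $\tilde g_i\mid F-\lambda_iZ^k$, the projective curves $\tilde g_i$ have pairwise no common component (a common component would lie inside $L$, impossible since $Z\nmid\tilde g_i$) and meet one another only at these finitely many points of $L$; each factorisation $f-\lambda=\prod_tg_{\lambda,t}^{m_{\lambda,t}}$ moreover induces a factorisation of the \emph{fixed} form $f_k$ into the leading forms of the $g_{\lambda,t}$. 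Applying Bezout's theorem to the $\tilde g_i$ and bookkeeping these leading-form factorisations of $f_k$ — using that, when $f$ is non-composite, the leading forms of the factors cannot all concentrate on too few linear forms without forcing $f$ to have the shape $Q(L(x,y))$ — one expects to bound the distinct-component contribution so that, combined with the repeated-factor bound above, $\rho(f)\le k-1$.

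The main obstacle is exactly this last combination. One must analyse the local intersection behaviour along $L$ carefully — in particular at points of $L$ where lower-order terms of $F$ also vanish, so that the members of the pencil share tangent directions and the intersection multiplicities there are inflated — and then arrange that the repeated-factor and distinct-component estimates do not simply add up to $2(k-1)$ but in fact to $k-1$, which strongly suggests that both contributions should be read off a single invariant (such as the intersection number of a general member of the pencil with a fixed curve not belonging to it, or the degree of a suitable discriminant of $f-t$). It is in order to sidestep precisely these reducibility phenomena at infinity that we invoke Stein's theorem in the stated form.
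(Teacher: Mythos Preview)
The paper does not prove this statement at all: it is quoted verbatim as a result of Y.~Stein and cited to \cite{YS}, with no argument supplied. So there is no ``paper's own proof'' to compare against; the theorem is used as a black box in the proof of Theorem~3.4.

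As for your proposal itself, it is not a proof but an honest sketch that stops short of the goal, and you say so. The reduction to the total reducibility number $\rho(f)$ and the splitting $N(\lambda)-1=\sum_t(m_{\lambda,t}-1)+(r(\lambda)-1)$ are both correct and standard, and the derivative argument bounding the repeated-factor contribution by $k-1$ is fine. The genuine gap is exactly where you locate it: the Bezout-at-infinity bookkeeping, as written, only yields a bound of the shape $2(k-1)$ when added to the repeated-factor estimate, not $k-1$, and your penultimate paragraph is a description of what one would have to control rather than a mechanism for controlling it. The final sentence then lapses into circularity---you cannot ``invoke Stein's theorem in the stated form'' inside a purported proof of Stein's theorem.

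If you want to close the gap, the known arguments do not proceed by naive Bezout counting. Stein's original proof and the closely related approach of Ruppert work with closed (or logarithmic) differential $1$-forms attached to the pencil $f-t$ and compare dimensions, which is what forces the two contributions to be governed by a single quantity of size $k-1$ rather than two separate ones. An alternative, more topological route computes the Euler characteristic of a generic fibre of $f:\mathbb C^2\to\mathbb C$ and compares it to the special fibres via an additivity/monodromy argument; again the point is that a single global invariant absorbs both the multiple-component and the non-reduced contributions. Either route would replace your last two paragraphs.
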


We shall need  a theorem by
Sz\'ekely \cite{LS}, which is a generalization
of Szemer\'edi-Trotter incidence theorem in the plane.

\begin{theorem}
Let $P$ be a finite collection of points in $\mathbb R^2$, and $L$
be a finite collection of curves in $\mathbb R^2$. Suppose that
for any two curves in $L$ intersect in at most $\alpha$ points,
and any two points in $P$ are simultaneously incident to at most
$\beta$ curves. Then
$$I(P,L)=|\{(p.\ell) \in P \times L : p \in \ell\}| \leq (\alpha^{1/2}\beta^{1/3}|P|^{2/3}|L|^{2/3}+|L|+\beta|P|).$$
\end{theorem}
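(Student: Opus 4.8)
The plan is to derive this incidence bound from the crossing number inequality for multigraphs, by the graph-drawing method of Sz\'ekely. The version of the crossing inequality I would invoke is: if $G$ is a multigraph with $n$ vertices, $e$ edges, and maximum edge multiplicity at most $m$, then either $e \le C m n$ or $\mathrm{cr}(G) \ge c\, e^3/(m n^2)$, for absolute constants $C,c>0$ (this follows from the Ajtai--Chv\'atal--Newborn--Szemer\'edi/Leighton inequality together with a random vertex-sampling argument to handle multiplicities). First I would discard every curve of $L$ meeting $P$ in fewer than two points; each such curve carries at most one incidence, so this costs at most $|L|$ in $I(P,L)$ and does not increase $|P|$ or $|L|$, hence it suffices to prove the bound when every remaining curve meets $P$ at least twice.

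For each remaining curve $\ell$, I would list the points of $P$ on $\ell$ in the order in which they occur along $\ell$ and join consecutive ones by the intervening arc of $\ell$; doing this for all curves produces a drawing in the plane of a multigraph $G$ whose vertices are the points of $P$ lying on some remaining curve. A curve carrying $k$ points of $P$ contributes at least $k-1$ edges (exactly $k$ if it is closed, which only helps), so $e(G) \ge I(P,L) - |L|$. Since any two points of $P$ lie together on at most $\beta$ curves, at most $\beta$ arcs join a fixed pair of vertices, so the edge multiplicity of $G$ is at most $\beta$. To bound the crossings: an arc from $\ell$ and an arc from $\ell'$ can meet only where $\ell$ meets $\ell'$, there are at most $\alpha$ such points, and a point off $P$ is interior to at most one arc of each curve, so the pair $(\ell,\ell')$ contributes at most $\alpha$ crossings; summing over pairs, $\mathrm{cr}(G) \le \alpha\binom{|L|}{2} \le \alpha|L|^2$.

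Feeding this into the crossing inequality with $n=|P|$, $m=\beta$, $e=e(G)$ finishes the argument. If $e \le C\beta|P|$, then $I(P,L) \le e + |L| \le C\beta|P| + |L|$, which is within the claimed bound. Otherwise $c\,e^3/(\beta|P|^2) \le \mathrm{cr}(G) \le \alpha|L|^2$, so $e \lesssim \alpha^{1/3}\beta^{1/3}|P|^{2/3}|L|^{2/3}$, and since $I(P,L) \le e + |L|$ and $\alpha^{1/3} \le \alpha^{1/2}$ (because $\alpha\ge 1$) we again land inside $\alpha^{1/2}\beta^{1/3}|P|^{2/3}|L|^{2/3} + |L| + \beta|P|$. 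Combining the two cases yields the theorem.

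\textbf{Where the difficulty lies.} Two technical points hide behind the routine combinatorics. The first is that the "arcs" form a genuine drawing only after a harmless perturbation: the curves should be placed in general position so that they meet pairwise in finitely many transversal points, no arc passes through a third vertex, and arcs sharing a vertex do not cross near it — one must check such a perturbation exists without creating new pairwise intersections; for algebraic curves, the case actually used in this paper, this is automatic, so the point only bites in the stated generality. The second, and the genuinely substantive one, is the multigraph form of the crossing lemma with the \emph{linear} dependence $e^3/(\beta n^2)$ on the multiplicity: the naive reduction "keep one copy of each parallel edge" loses a factor $\beta^3$ and would cost an extra power of $\beta$ in the final bound, so getting the stated exponent $\beta^{1/3}$ requires the sharpened multigraph crossing inequality of Sz\'ekely (also due to Pach--T\'oth). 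Everything else is bookkeeping with the three error terms.
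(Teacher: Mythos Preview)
Your argument is the standard Sz\'ekely proof and is essentially correct: build a multigraph on $P$ by cutting each curve into arcs at its incident points, bound the multiplicity by $\beta$, bound the number of crossings by $\alpha\binom{|L|}{2}$, and apply the multigraph crossing-number inequality. The two caveats you flag (general position for the drawing, and the need for the \emph{multigraph} form of the crossing lemma with the linear dependence on $m$) are exactly the right places to be careful, and your handling of them is fine for the level of rigor appropriate here.

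That said, there is nothing to compare against: the paper does not prove this theorem at all. It is quoted in the preliminaries section as a result of Sz\'ekely \cite{LS} and used as a black box in the proof of the main theorem. So you have supplied a proof where the paper simply cites one; your sketch is faithful to Sz\'ekely's original argument.

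One small cosmetic point: you note that $\alpha^{1/3}\le\alpha^{1/2}$ to match the stated exponent. This is correct but slightly odd --- the natural output of the crossing-lemma argument is $\alpha^{1/3}$, and the $\alpha^{1/2}$ in the statement is just a (looser) form that appears in some write-ups. For the application in this paper $\alpha$ is bounded by $k$, so the distinction is irrelevant.
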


\section{main results}
As discussed in section 1, we will be applying the Sz\'ekely's theorem. Therefore we need to take the advantage of the non-degeneracy property of the polynomial to construct a bunch of curves which each of them has large intersections with some appropriate points set $P$. In order to apply the Sz\'ekely's theorem efficiently, we need to control the number of the curves. It turns out that we shall need the following theorems.

\begin{theorem} Given $f(x,y) \in \mathbb R[x,y]$ of degree $k \geq 2,$ and assume that $\deg_x(f) \geq \deg_y(f)$. Suppose there exists distinct $a_1,..,a_{k^2+1},$ and $b_1,..,b_{k^2+1}$ and a
polynomial
$$Q(t)=q_mt^m+q_{m-1}t^{m-1}+...+q_0$$ so that
$$f(x,a_i)=Q(x+b_i)$$ for each $i$. Then $f(x,y)=Q(g(x,y))$ for some $g(x,y),$ and $\deg Q \geq 2.$
\end{theorem}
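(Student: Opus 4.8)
The plan is to show that the hypotheses force the shifts $b_i$ to be the values $\psi(a_i)$ of a single polynomial $\psi$ of small degree, and then to verify that $g(x,y):=x+\psi(y)$ works by a Bezout-style degree count comparing $f$ with $Q(g)$.

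First I would extract information from the top coefficients of $f$ in the variable $x$. Write $f(x,y)=\sum_{j=0}^{d}c_j(y)x^j$ with $d=\deg_x f$ and $c_d\not\equiv 0$. Since $q_m\neq 0$, the polynomial $Q(x+b_i)$ has $x$-degree exactly $m$, so each $f(x,a_i)$ has $x$-degree exactly $m$; as $c_d$ is a nonzero polynomial of degree $\leq k-d<k^2+1$, it cannot vanish at the $k^2+1$ distinct points $a_i$, which forces $d=m$. Comparing the coefficient of $x^m$ then gives $c_m(a_i)=q_m$ for all $i$, hence $c_m\equiv q_m$ by the same degree count, and comparing the coefficient of $x^{m-1}$ gives $c_{m-1}(a_i)=mq_mb_i+q_{m-1}$. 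If $m=1$ this would force $f=q_1x+c_0(y)$ with $\deg c_0=k\geq 2$, so $\deg_y f=k>1=\deg_x f$, contradicting $\deg_x f\geq\deg_y f$; thus $m\geq 2$, and in particular $mq_m\neq 0$, so $b_i=\psi(a_i)$ where $\psi(y):=(c_{m-1}(y)-q_{m-1})/(mq_m)$ is a polynomial of degree $\deg c_{m-1}\leq k-m+1$.

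Next I would put $g(x,y):=x+\psi(y)$ and $\tilde f(x,y):=Q(g(x,y))$, and set $R:=f-\tilde f$. For each $i$, $\tilde f(x,a_i)=Q(x+\psi(a_i))=Q(x+b_i)=f(x,a_i)$, so $R$ vanishes identically in $x$ along every line $y=a_i$. Viewing $R$ as a polynomial in $y$ over the integral domain $\mathbb R[x]$, the factor theorem yields $(y-a_i)\mid R$ for each $i$, and since the $a_i$ are distinct, $\prod_{i=1}^{k^2+1}(y-a_i)\mid R$; hence either $R=0$ or $\deg_y R\geq k^2+1$.

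The proof would then close with a degree estimate. We have $\deg_y f\leq k<k^2+1$, while from $\tilde f=\sum_{j\leq m}q_j(x+\psi(y))^j$ we get $\deg_y\tilde f\leq m\,\deg\psi\leq m(k-m+1)\leq (k+1)^2/4<k^2+1$. Therefore $\deg_y R<k^2+1$, which forces $R=0$, i.e. $f=Q(g)$ with $g(x,y)=x+\psi(y)$ and $\deg Q=m\geq 2$. I expect the decisive point to be the observation in the second paragraph that $b_i$ is a \emph{polynomial} function of $a_i$: this is what both supplies the candidate $g$ and keeps $\deg\psi$ small enough that the final count genuinely beats $k^2+1$ — a naive Lagrange interpolant through the $b_i$ would have degree about $k^2$, making $\deg_y\tilde f$ far too large. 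The hypothesis $\deg_x f\geq\deg_y f$ is used only to rule out $m=1$.
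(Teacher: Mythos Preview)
Your proof is correct and follows essentially the same route as the paper: compare the coefficients of $x^m$ and $x^{m-1}$ in $f(x,a_i)=Q(x+b_i)$ to show $\deg_x f=m$, $c_m\equiv q_m$, and $b_i=\psi(a_i)$ with $\deg\psi\le k-m+1$, then use the degree bound $\max\{k,\,m(k-m+1)\}\le k^2$ to force $f-Q(x+\psi(y))=0$. The only cosmetic differences are that you rule out $m\le 1$ \emph{before} dividing by $mq_m$ (the paper does it at the end) and you phrase the final vanishing via the factor theorem in $\mathbb R[x][y]$ rather than by fixing $x_0$ and working in $\mathbb R[y]$.
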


\begin{proof}
First we write $f(x,y)=c_kx^k+\cdots+x^m(a_{k-m}'y^{k-m}+a_{k-m-1}'y^{k-m-1}+\cdots+a_0')+x^{m-1}(a_{k-m+1}y^{k-m+1}+a_{k-m}y^{k-m}+\cdots+a_0)+ h(x,y)$, where
$h(x,y)$ is the lower degree terms in $x$ of $f(x,y).$ By assumptions, for each $i$ we have
$$f(x,a_i)=c_kx^k+\cdots+x^m(\sum_{h=0}^{k-m}a_h'a_{i}^h)+x^{m-1}(\sum_{h=0}^{k-m+1}a_ha_{i}^h)+h(x,a_i)$$
which is equal to
$$Q(x+b_i)=q_m(x+b_i)^m+q_{m-1}(x+b_i)^{m-1}+\cdots.$$
We compare the coefficients of the term $x^m$. By our assumption on  $a_i,$ we first conclude that $f(x,y)$ doesn't have $x^l$ terms for $l > m$, and $a_h'=0$ for $h=1 \sim k-m,$ and $a_0'=q_m.$
We compare the coefficients of the term $x^{m-1}$ to get
$$q_mmb_{i}+q_{m-1}=\sum_{h=0}^{k-m+1}a_ha_{i}^{h},$$
which gives  $b_i=\frac{\sum_{h=0}^{k-m+1}a_ha_{i}^{h}-q_{m-1}}{q_mm}$,
for each $1\leq i \leq k^2+1$. Now given any $x_0 \in \mathbb R$,
$f(x_0,y)-Q(x_0+\frac{\sum_{h=0}^{k-m+1}a_hy^h-q_{m-1}}{q_mm})$ is a polynomial in $y$ of degree
$\leq \max\{k,m(k-m+1)\} \leq k^2$, but is zero for distinct $k^2+1$ values of $a_i$. Therefore we
conclude that $f(x,y)=Q(x+\frac{\sum_{h=0}^{k-m+1}a_hy^{h}-q_{m-1}}{q_mm}).$ Since we assume $\deg(f) \geq 2,$ we also conclude that $\deg Q \geq 2,$ otherwise it will contradict the assumption that $\deg_x(f) \geq \deg_y(f).$
\end{proof}

\begin{Corollary}
 Given $f(x,y) \in \mathbb R[x,y]$
of degree $k \geq 2$. Suppose $\deg_x(f) \geq \deg_y(f),$ and there exists $(k^3+1)$ distinct points
$S=\{(a_i,b_i)\}_{i=1}^{k^3+1}$ in the plane such that
\[f(x-a_1,b_1)=f(x-a_2,b_2)=\cdots=f(x-a_{k^3+1},b_{k^3+1}).\]
Then $f$ is composite.
\end{Corollary}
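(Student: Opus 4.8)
The plan is to deduce the Corollary from Theorem 3.1 by a pigeonholing argument on the point set $S$. Write $P(x):=f(x-a_1,b_1)$ for the common one-variable polynomial; the hypothesis then reads $f(x,b_i)=P(x+a_i)$ for every $i$. Comparing with the hypothesis of Theorem 3.1 (which is of the shape $f(x,c_i)=Q(x+d_i)$ with distinct $c_i$'s and distinct $d_i$'s), we see that if we can find $k^2+1$ indices $i$ for which the $b_i$ are pairwise distinct \emph{and} the $a_i$ are pairwise distinct, then Theorem 3.1 applies with $Q=P$ and yields $f(x,y)=P(g(x,y))$ with $\deg P\ge 2$, i.e. $f$ is composite. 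So the whole task is to extract such a subfamily of size $k^2+1$ from the $k^3+1$ points of $S$.

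We may assume $\deg_y f\ge 1$, since otherwise $f\in\mathbb R[x]$ has degree $k\ge 2$ and is trivially composite (take $g(x,y)=x$, $Q=f$). Now two elementary observations do the extraction. First, if $b_i=b_j$ while $a_i\ne a_j$, then $P(x+a_i)=f(x,b_i)=f(x,b_j)=P(x+a_j)$, so $P$ is invariant under the nonzero shift $x\mapsto x+(a_i-a_j)$; a non-constant polynomial has no nonzero period, hence $P$ must be constant. Thus, \emph{provided $P$ is non-constant}, all the $b_i$ are automatically distinct and distinctness of the $b$-list is free. Second, for a fixed $a\in\mathbb R$ the number of $b$ with $f(x,b)=P(x+a)$ identically in $x$ is at most the $y$-degree of $f(x,y)-P(x+a)\in\mathbb R[x][y]$, which equals $\deg_y f\le k$ (since $P(x+a)$ is free of $y$, and this polynomial is not identically zero once $\deg_y f\ge 1$). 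So no value occurs among the $a_i$ more than $k$ times; partitioning $S$ into classes by the value of $a_i$ gives at least $(k^3+1)/k>k^2$, hence at least $k^2+1$, classes, and picking one point from $k^2+1$ of them yields $k^2+1$ points with pairwise distinct $a$-coordinates — and, $P$ being non-constant, pairwise distinct $b$-coordinates too. Theorem 3.1 then finishes this case.

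The step I expect to be the main obstacle is the remaining case, in which the common polynomial $P$ is constant: then $f(x,b_i)$ is that same constant $c$ for all $i$, each line $y=b_i$ lies inside a single level set of $f$, and the periodicity trick above is vacuous. My plan here is to exploit $\deg_x f\ge\deg_y f$ together with $k\ge 2$: letting $b^{(1)},\dots,b^{(r)}$ be the distinct values among the $b_i$, one has $\prod_{s=1}^{r}(y-b^{(s)})\mid f(x,y)-c$ with $r\le\deg_y f$, and the aim is to turn this divisibility, under the degree constraint, into either a contradiction or directly a composite decomposition of $f$ — alternatively, in the situation where the Corollary is actually invoked one may arrange that the common polynomial is the generic restriction $f(x,b)$, which has degree $\deg_x f\ge 1$ and is therefore non-constant, making this case vacuous. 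Disposing of this last case cleanly is where I expect the real work to lie, since the bare divisibility by $\prod_s(y-b^{(s)})$ does not by itself pin down the structure of $f$.
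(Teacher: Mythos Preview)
Your non-constant case is correct and in fact more careful than it needs to be: the proof of Theorem~3.1 only uses the distinctness of the $y$-inputs (your $b_i$'s), not of the shifts (your $a_i$'s), so once you have shown that a non-constant $P$ forces all $k^3+1$ of the $b_i$ to be distinct, you can invoke Theorem~3.1 immediately without the second pigeonhole on the $a$-coordinates. The paper's proof does exactly this simpler split: either there are at least $k^2+1$ distinct $b_i$ and Theorem~3.1 applies, or some single value $b$ occurs at least $k+1$ times, whence $P$ is constant.

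Your worry about the constant case is not merely justified --- it is fatal to the Corollary as literally stated. The paper disposes of this case in one line, asserting that ``a direct computation shows that in this case the only possibility is that $f$ is a one-variable polynomial in $y$,'' but no such computation is given, and the claim is false. Take $f(x,y)=x^2y$: here $k=3$, $\deg_x f=2\ge 1=\deg_y f$, and with $b_i=0$ and any $k^3+1$ distinct $a_i$ one has $f(x-a_i,0)\equiv 0$, so the hypothesis holds. Yet $x^2y$ is not composite over $\mathbb C$, since a decomposition $Q\circ g$ with $\deg Q\ge 2$ would force $\deg g=1$, and no $Q(ax+by+c)$ has top homogeneous part $x^2y$. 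Replacing $f$ by $x^2y+1$ shows the issue is not cured by the paper's later removal of those $b$ with $f(x,b)\equiv 0$.

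Your proposed escape route is the right repair. In the only application of the Corollary (inside the proof of Theorem~3.4), one may first discard from $A$ the finitely many $b$ for which $f(x,b)$ is constant: since $\deg_x f\ge 1$, any such $b$ is a root of the leading $x$-coefficient of $f$, a nonzero polynomial in $y$ of degree at most $k-1$. After this preprocessing the common polynomial $P$ is guaranteed non-constant, your Case~B is vacuous, and your Case~A argument finishes the job. So your proposal, together with this remark, gives a complete proof of the version of the Corollary that is actually used --- and correctly flags a genuine gap in the paper's own argument.
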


\begin{proof}
 Suppose there are $\geq k^2+1$ distinct $b_i$ such that $f(x-a_1,b_1)=f(x-a_2,b_2)=\cdots=f(x-a_i,b_i),$ we then apply theorem 3.1 to get that $f$ is composite. If not, there must exist one $b \in \{b_i :  (a_i,b_i) \in S\}$ such that there are $\geq k+1$ distinct $a_i$ so that $f(x-a_1,b)=f(x-a_2,b)=\cdots=f(x-a_{k+1},b).$ A direction computation shows that in this case the only possible is that $f$ is a one variable polynomial in $y$ of degree $\geq 2,$ which is composite.
\end{proof}

\begin{remark}
The assumption $\deg_x(f) \geq \deg_y(f)$ is necessary because we might have the case $f(x,y)=x+y^2.$
\end{remark}

\begin{theorem}
Given non degenerate polynomial $f(x,y)$ of degree $k$. Then for any finite set $A \subset \mathbb R$, one has
$$|A+A||f(A,A)| \gtrsim |A|^{5/2}.$$
\end{theorem}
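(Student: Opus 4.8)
The plan is to run an Elekes--Sz\'ekely incidence argument, after two reductions. First, $A+A$ and $f(A,A)$ are unchanged if we interchange the two variables of $f$, so we may assume $\deg_x(f)\ge\deg_y(f)$; also, a non-degenerate $f$ has degree $k\ge 2$ and genuinely involves both variables, since a polynomial in one variable (or of degree $\le1$) is of the form $Q(L)$. Second, if $f$ is \emph{composite}, say $f=Q(g)$ with $\deg Q\ge2$, then $g$ is again non-degenerate (if $g$ were degenerate then $f=Q(g)$ would be too) and $\deg g=\deg f/\deg Q<\deg f$, while $|f(A,A)|=|Q(g(A,A))|\ge|g(A,A)|/\deg Q$; so the asserted bound for $f$ follows from the one for $g$ by induction on the degree. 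Hence it is enough to treat \emph{non-composite}, non-degenerate $f$ (the case $|A|\le C(k)$ being trivial after adjusting the implied constant).

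So assume $f$ is non-composite and non-degenerate with $\deg_x f\ge\deg_y f$ and $k=\deg f\ge2$. Set $P=(A+A)\times f(A,A)$, so $|P|=|A+A|\,|f(A,A)|$, and for $(s,t)\in A\times A$ let $C_{s,t}$ be the graph $\{(x,f(x-s,t)):x\in\mathbb R\}$ (an algebraic curve of $x$-degree $\le\deg_x f$). For each $a\in A$ the point $(s+a,f(a,t))$ lies in $P$, and these $|A|$ points have distinct first coordinates, so every $C_{s,t}$ carries at least $|A|$ points of $P$. Let $\mathcal L$ be the set of \emph{distinct} curves $C_{s,t}$ coming from parameters with $t$ outside the finite exceptional set $T_0=\{t:f(\cdot,t)\text{ is constant in }x\}$ (the $\le k$ curves with $t\in T_0$ are horizontal lines and are simply discarded). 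The key point is that each curve in $\mathcal L$ equals $C_{s,t}$ for at most $k^3$ pairs $(s,t)$: such a curve is non-horizontal, so all its parameters have $t\notin T_0$, and for fixed $t\notin T_0$ the shift $s$ is uniquely determined; hence $k^3+1$ parameters would give $k^3+1$ distinct points $(s_i,t_i)$ with $f(x-s_1,t_1)=\cdots=f(x-s_{k^3+1},t_{k^3+1})$, forcing $f$ composite by Corollary~3.2 --- a contradiction. Since there are $\ge|A|^2-k|A|$ parameters with $t\notin T_0$, we get $|\mathcal L|\gtrsim|A|^2$ and hence an incidence count $I(P,\mathcal L)\ge|A|\,|\mathcal L|\gtrsim|A|^3$.

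Now apply Sz\'ekely's theorem. Two distinct curves of $\mathcal L$ meet where the difference of two distinct polynomials of $x$-degree $\le k$ vanishes, so we may take $\alpha=k$. The real work is the uniform bound $\beta=O_k(1)$. Given distinct points $p,q$, a curve $C_{s,t}$ through both forces $p_1\ne q_1$ (a graph meets a vertical line at most once), and then the parameter $(s,t)$ lies on the two plane curves $\{f(p_1-s,t)=p_2\}$ and $\{f(q_1-s,t)=q_2\}$, each of degree $\le k$; by Bezout there are at most $k^2$ such $(s,t)$ \emph{provided these two curves share no common component}. After substituting $u=p_1-s$ this asks whether $f(u,t)-p_2$ and $f(u+c,t)-q_2$ are coprime, where $c=q_1-p_1\ne0$. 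If $f(u,t)-p_2$ is irreducible, a common component would make it divide $f(u+c,t)-q_2$, hence (comparing degrees and leading forms) equal to it, so $f(u+c,t)-f(u,t)$ would be a constant --- impossible for a non-degenerate (in particular non-linear) $f$ when $c\ne0$, by a short degree computation. Thus a common component can occur only when $f-p_2$ is reducible, i.e. $p_2\in\sigma(f)$; by Stein's theorem $|\sigma(f)|<k$. We therefore pass to $P'=(A+A)\times\bigl(f(A,A)\setminus\sigma(f)\bigr)$: any two points of $P'$ lie on at most $k^2$ curves of $\mathcal L$, so $\beta=k^2$ works, while each curve of $\mathcal L$ still meets $P'$ in at least $|A|-k^2$ points (only those of its listed points whose second coordinate lies in $\sigma(f)$ are lost, and there are fewer than $k\cdot|\sigma(f)|<k^2$ of them). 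Hence $I(P',\mathcal L)\gtrsim|A|^3$, and Sz\'ekely's inequality gives
$$|A|^3\lesssim\bigl(|A+A|\,|f(A,A)|\bigr)^{2/3}|A|^{4/3}+|A|^2+|A+A|\,|f(A,A)|.$$
Whenever the desired conclusion fails the last two terms are $\lesssim|A|^{5/2}$, so the first term must dominate, giving $|A+A|\,|f(A,A)|\gtrsim|A|^{5/2}$.

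The step I expect to be the main obstacle is the uniform bound $\beta=O_k(1)$. One cannot hope that the curves $C_{s,t}$ spread out: already for $f=xy$, a point of the $x$-axis lies on roughly $|A|$ of them when $0\in A$. The point is that a \emph{second} incidence pins the curve down --- this is exactly the Bezout estimate --- and that the finitely many ``bad'' levels where Bezout can fail are controlled by Stein's theorem, which is why both results appear. The remaining bookkeeping (the normalization $\deg_x f\ge\deg_y f$, the composite/degenerate dichotomy, the exceptional horizontal curves and the excision of $\sigma(f)$) is routine but needs to be carried out with some care.
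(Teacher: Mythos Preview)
Your proof is correct and follows essentially the same approach as the paper: the same reduction to non-composite $f$ with $\deg_x f\ge\deg_y f$, the same Elekes-type family of translated graphs $C_{s,t}=\{(x,f(x-s,t))\}$, Corollary~3.2 to bound curve multiplicity, Bezout together with Stein's theorem to control the two-point parameter $\beta$ after excising the levels in $\sigma(f)$, and Sz\'ekely's incidence bound to finish. Your treatment of the Bezout step is in fact slightly more explicit than the paper's (you separately dispose of the case where the common component has full degree $k$, forcing $f(u+c,t)-f(u,t)$ to be constant and contradicting non-degeneracy), but the overall argument is the same.
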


Before we proceed to prove our main theorem, we observe that our non-degenerate polynomial $f(x,y)$ could be $Q(g(x,y))$ for some $Q(t) \in \mathbb R[t]$ and some non-degenerate polynomial $g(x,y)$. In this case, we will work on $g(x,y)$ instead of $f(x,y)$, since we are concerned the cardinality and we use a fact that $|f(A,A)| \geq \frac{1}{deg Q} |g(A,A)|,$ which in turn says that we can assume $f(x,y)$ is not composite. In addition, we can always assume the $\deg_x(f) \geq \deg_y(f)$, since again we are concerned $|f(A,A)|$ ( for example if $f(x,y)=x+y^2$, we write it as $x^2+y$).

\begin{proof}

Given $y_0 \in \mathbb R$, let $f_{y_0}(x)=f(x,y_0).$ We first remove the elements $b$ in $A$ such that
$f(x,b)$ is identically zero. Since $f$ is of degree $k$, there are at most $k$ elements $b$ which make this happen. We now abuse the notation, let $A=A-\{b_1,..,b_k\},$ where $f(x,b_i)$ is identically zero. Now given
$(a,b) \in A \times A,$  let $l_{a,b}=\{(x+a, f_{b}(x)) : x \in
\mathbb R \}$, and let $L=\{l_{a,b}: a, b \in A \}$. Furthermore
for each $(a,b) \in A \times A$, we represent $l_{a,b}$ by
$\{(x+a, T_{b}(x+a) : x \in \mathbb R\}= \{(x',T_{(a,b)}(x')) : x'
\in \mathbb R\}$, where $T_b(x+a)$ is the Taylor polynomial of
$f_b(x)$ about $a$. Let us write $T_b(x+a)=$$\sum_{j=0}^k c_j(x+a)^j$ for some
$c_j$ and $T_{(a,b)}(x)=\sum_{j=0}^k c_jx^j$. We note that for each
$(a,b)$, $T_{(a,b)}(x)$ is a polynomial of degree $\leq k.$ Therefore for any
two pairs $(a,b),(c,d) \in A \times A$, if $T_{(a,b)}(x)$ intersects
$T_{(c,d)}(x)$ more than $k+1$ points, then $T_{(a,b)}(x)=T_{(c,d)}(x)$.
Now given $(a,b),(c,d) \in A \times A$, we say $(a,b) \sim (c,d)$
if and only if $T_{(a,b)}(x)=T_{(c,d)}(x).$ This is equivalently saying
the Taylor polynomials of $f_b(x)$ and $f_d(x)$ about $a$ and $ c$
respectively have the same form, i.e.
$$T_{b}(x+a)=c_k(x+a)^k+c_{k-1}(x+a)^{k-1}+\cdots+c_0$$ and
$$T_{d}(x+c)=c_k(x+c)^k+c_{k-1}(x+c)^{k-1}+\cdots+c_0.$$
First we observe that $T_{(a,b)}(x)=f(x-a,b),$ we now apply Corollary 3.2 to get that each equivalence class has
at most $k^3$ elements. Therefore $L'=\{T_{(a,b)}(x): a,b \in A\}/ \sim$
has at least $\frac{|A|^2}{k^3}$ equivalence classes. For each
equivalence class we choose one represented curve, and conclude
that there are $\geq \frac{|A|^2}{k^3}$ curves, and any two of them
intersect at most $k$ points. We now show that for most pairs of points in $P=(A+A)\times f(A,A),$ there are at most $k^2$ curves from $L'$ which are
incident to them simultaneously.  We note that if the curve $T_{(a,b)}$ incident to some point $p=(x',y') \in P,$ we have $f(x'-a,b)=y'.$  Given any two points $p_1=(x_0,y_0), p_2=(x_0',y_0') $ in $P=(A+A) \times f(A,A).$ Consider
two algebraic curves $f(x_0-x,y)-y_0=0$ and $f(x_0'-x,y)-y_0'=0.$ If there is a curve incident to $p_1$ and $p_2$ simultaneously, then there exists a pair $(a,b)$ such that these two algebraic curves intersect at $(a,b)$. By Bezout's theorem,
there are at most $k^2$ pairs $(a,b)$ so that $$f(x_0-a,b)-y_0=0$$ and $$f(x_0'-a,b)-y_0'=0,$$ unless these two algebraic curves $f(x_0-x,y)-y_0=0$ and $f(x_0'-x,y)-y_0'=0$ have a common factor, which means
$$f(x_0-x,y)-y_0=G(x,y)H(x,y),$$
and
$$f(x_0'-x,y)-y_0'=G(x,y)H'(x,y).$$
This implies $$f(x,y)-y_0=G(x_0-x,y)H(x_0-x,y),$$ and $$f(x,y)-y_0'=G(x_0'-x,y)H'(x_0'-x,y),$$
 which in turn shows that the $y$ coordinates of the points $p_1$ and $p_2$ are from $\sigma(f).$ Therefore by Stein's theorem, we conclude that we can remove at most $|A+A|k$ points from $P=(A+A)\times f(A,A)$, and any pair in the rest of points in $P$ has at most $k^2$ curves incident to them simultaneously. Therefore we let $P'=P - \{(A+A)\times \sigma(f)\},$ and observe that each curve $T_{(a,b)} \in L'$ incidents to at least $|A|/k$ points in $P'$. We now apply theorem 2.5 on $P'$ and $L'$ to get
$$\frac{|A|^2}{k^3}\frac{|A|}{k} \lesssim (|P'|\frac{|A|^2}{k^3})^{2/3},$$ which implies $|A+A||f(A,A)| \gtrsim |A|^{5/2}.$
\end{proof}

{\bf Acknowledgment :} The author would like to thank his colleague Tuyen Truong and Liangpan Li for some helpful comments.

\end{document}